\documentclass{amsart}

\usepackage[utf8]{inputenc}
\usepackage[T1]{fontenc}
\usepackage{amsmath,amssymb,amsthm,mathtools}
\usepackage{hyperref}
\usepackage{url}
\usepackage{booktabs}
\usepackage{graphicx}
\usepackage{geometry}
\newtheorem{theorem}{Theorem}[section]
\newtheorem{proposition}[theorem]{Proposition}
\newtheorem{corollary}[theorem]{Corollary}
\newtheorem{lemma}[theorem]{Lemma}
\theoremstyle{definition}
\newtheorem{definition}[theorem]{Definition}
\theoremstyle{remark}
\newtheorem{remark}[theorem]{Remark}

\newcommand{\ZZ}{\mathbb{Z}}
\newcommand{\RR}{\mathbb{R}}
\newcommand{\CC}{\mathbb{C}}
\newcommand{\QQ}{\mathbb{Q}}
\newcommand{\Ical}{\mathcal{I}}
\newcommand{\Dih}{\mathrm{Dih}}

\DeclareMathOperator{\tr}{tr}
\DeclareMathOperator{\diag}{diag}
\DeclareMathOperator{\Cay}{Cay}
\DeclareMathOperator{\End}{End}
\DeclareMathOperator{\Gal}{Gal}

\title{Transfer Operators and Independence Polynomials\\for Strong Powers of Circulant Graphs}

\author{Todd Hildebrant}
\address{Independent Researcher}
\email{thildebrant@gmail.com}

\begin{document}
\begin{abstract}
We study independent sets in strong powers of circulant graphs using a transfer matrix formulation. The compatibility constraints separate into intra-layer and inter-layer components, yielding a transfer operator that is equivariant under the dihedral group action. The characteristic polynomial of the transfer operator factors into an \emph{anomalous} component (arising from the trivial isotypic component, with rational coefficients) and a \emph{cyclotomic} component (arising from nontrivial Fourier modes, splitting over the maximal real cyclotomic subfield). 
We show that the spectral radius is attained in the trivial isotypic component, so the dominant exponential growth is governed by a low-dimensional orbit-compressed operator.
The independence polynomial is computed exactly for strong cylinders and tori, with the cyclotomic sector contributing a sparse correction confined to high-weight coefficients. All results are verified for $C_7$.

\end{abstract}

\maketitle

%% ────────────────────────────────────────────────────────────
\section{Introduction}
%% ────────────────────────────────────────────────────────────

Let $G$ be a graph.  The \emph{Shannon capacity}
\[
\Theta(G) = \lim_{d \to \infty} \alpha(G^{\boxtimes d})^{1/d}
\]
is one of the central invariants in zero-error information theory.  Even for simple families such as odd cycles, determining $\Theta(G)$ remains open.

For circulant graphs $G = \Cay(\ZZ_n,C)$, the adjacency matrix of $G^{\boxtimes d}$ is block circulant with circulant blocks (BCCB) and diagonalizes via the tensor Fourier transform, yielding spectral bounds such as the Lov\'asz theta function $\vartheta(G)$.  These bounds are tight for even cycles and the Paley graph $P(5) = C_5$, but for $n \geq 7$ odd, the gap between $\vartheta(C_n)$ and the best known constructions remains unresolved. We use the term $\Cay$ to denote Cayley graphs, and $C_n$ to denote the cycle graph on $n$ vertices where it is clear from context, however the results apply to any circulant graph with symmetric connection set, which are also Cayley graphs of $\ZZ_n$.

In this paper we develop a complementary approach based on transfer matrices.  The independence constraints factor into intra-layer and inter-layer components, giving a transfer operator on the independent sets of the base graph.  This operator is equivariant under the dihedral action, leading to a block decomposition whose spectral structure we analyze in detail.

Our main results are:
\begin{enumerate}
\item The transfer operator admits a dihedral-equivariant block decomposition whose characteristic polynomial factors according to the isotypic components (Theorem~\ref{thm:factorization}).
\item The spectral radius lies in the trivial isotypic component, reducing the dominant growth to a finite-dimensional computation (Theorem~\ref{thm:dominance}).
\item For the cycle graph $C_7$, the orbit-compressed transfer matrix is $5 \times 5$, and the anomalous factor is an irreducible quartic with Galois group $S_4$, whose splitting field is disjoint from $\QQ(\cos(2\pi/7))$ (Corollary~\ref{cor:c7}).
\end{enumerate}

The author is not aware of any previous work on transfer operators for strong powers of circulant graphs, but the approach is classical, and closely related to the use of transfer matrices in statistical physics and combinatorics, as well as the use of group actions to decompose state spaces in dynamical systems and representation theory. The author is in the process of collecting the numerical data and code for computational replication. 

%% ────────────────────────────────────────────────────────────
\section{Transfer formulation}\label{sec:transfer}
%% ────────────────────────────────────────────────────────────

Let $n \geq 3$ and let $C \subset \ZZ_n \setminus \{0\}$ satisfy $C = -C$.  Define the circulant graph
\[
G = \Cay(\ZZ_n, C),
\]
with edge set $\{(i,j) : j - i \in C\}$.  Let $B := C \cup \{0\}$ denote the \emph{closed connection set} (the closed neighborhood of the identity in the Cayley graph).  We write $\Ical(G)$ for the family of independent sets of $G$.

For $d \geq 1$, the $d$-th strong power $G^{\boxtimes d}$ has vertex set $\ZZ_n^d$ with adjacency
\[
u \sim v \iff u \neq v \text{ and } v_j - u_j \in B \text{ for all } j.
\]
We define the \emph{Minkowski difference} $I - J = \{a - b \bmod n : a \in I,\, b \in J\}$ for subsets $I, J \subseteq \ZZ_n$.

\begin{proposition}\label{prop:layered}
A sequence $(S_1, \dots, S_d)$ with $S_i \subseteq \ZZ_n$ corresponds to an independent set of $G^{\boxtimes d}$ if and only if:
\begin{enumerate}
\item $S_i \in \Ical(G)$ for each $i$, and
\item $(S_i - S_{i+1}) \cap B = \varnothing$ for each $i$.
\end{enumerate}
\end{proposition}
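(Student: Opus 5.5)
The plan is to read the proposition as a fiber (layer) decomposition and then check both directions by a case split on the relative position of two vertices. We fix one coordinate as the \emph{layer index} and, for a vertex set $S$, take $S_i$ to be the fiber of $S$ over layer $i$, viewed as a subset of $\ZZ_n$ (the remaining coordinate). Two vertices in layers $i$ and $k$ can be adjacent only if their layer coordinates differ by an element of $B$. For the cycle case $C=\{\pm1\}$ this means $k\in\{i-1,i,i+1\}$. The first step is therefore to record precisely which pairs of layers interact: equal layers, and consecutive layers $i,i+1$. Consecutive layers are the only inter-layer pairs allowed for a path of layers, and for $C=\{\pm 1\}$ with the indices read cyclically they are the only ones. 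For general $C$, condition (2) would have to range over all pairs of layers whose indices differ by an element of $C$. I would state this reading explicitly, because I expect pinning down the identification to be the main obstacle; once it is fixed, the argument is bookkeeping.

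The second step handles two distinct vertices $u=(i,a)$ and $v=(i,b)$ in the same layer. They agree in the layer coordinate, and $0\in B$. By the definition of strong-power adjacency, $u\sim v$ if and only if $a\neq b$ and $b-a\in B$, that is, $b-a\in C$. Hence $S$ has no edge inside layer $i$ exactly when $S_i\in\Ical(G)$, which is condition (1).

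The third step handles $u=(i,a)$ and $v=(i+1,b)$ in consecutive layers. These vertices are automatically distinct, and their layer coordinates differ by $1\in B$. So $u\sim v$ if and only if $b-a\in B$. Since $B=-B$, this is equivalent to $a-b\in B$. Ranging over $a\in S_i$ and $b\in S_{i+1}$, the absence of such edges is exactly $(S_i-S_{i+1})\cap B=\varnothing$, with the Minkowski difference taken mod $n$. This is condition (2). The symmetry $C=-C$ ensures that the order of subtraction does not matter.

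Finally, vertices in layers whose indices differ by an element outside $B$ are never adjacent, because the layer coordinate already violates the adjacency rule. So they impose no constraint. Combining the three cases gives both implications at once: $S$ is independent if and only if every intra-layer pair and every consecutive-layer pair is non-adjacent, which is conditions (1) and (2). The correspondence $S\leftrightarrow(S_1,\dots,S_d)$ is a bijection between vertex subsets and layer sequences, so this proves the proposition.
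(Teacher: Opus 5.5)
Your proof is correct and follows the paper's approach: the paper simply calls the result immediate from the coordinatewise definition of strong-product adjacency, and your case split (same layer, consecutive layers, distant layers) is that unpacking written out. You were right to fix the reading first. The statement only makes sense for the strong cylinder $G \boxtimes P_d$, or the torus $G \boxtimes C_d$ with the wrap-around pair included, as the paper's subsequent remark indicates, rather than literally for $G^{\boxtimes d}$; and your caveat that a general connection set $C$ would require constraints between all layer pairs differing by an element of $C$ is accurate.
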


\begin{proof}
Immediate from the coordinate-wise definition of strong-product adjacency.
\end{proof}

\begin{remark}[Geometric interpretation]
The transfer formulation describes independent sets on strong cylinders $G \boxtimes P_d$ (free boundary conditions) and strong tori $G \boxtimes C_d$ (periodic boundary conditions). These correspond to one-dimensional sequences of cross-sections of the full strong product $G^{\boxtimes d}$.
\end{remark}

\subsection{The transfer operator}

\begin{definition}
The \emph{transfer matrix} $T : \RR^{\Ical(G)} \to \RR^{\Ical(G)}$ is defined by
\[
T[I,J] = \begin{cases} 1, & (I - J) \cap B = \varnothing, \\ 0, & \text{otherwise.} \end{cases}
\]
Equivalently, defining $c : \ZZ_n \to \{0,1\}$ by $c(j) = \mathbf{1}[j \notin B]$,
\[
T[I,J] = \prod_{u \in I} \prod_{v \in J} c(u - v).
\]
\end{definition}

The product form is the one that connects to the Fourier analysis in Section~\ref{sec:fourier}.

\begin{definition}
The \emph{fugacity-weighted transfer matrix} is
\[
M(x) = T \cdot D_x, \qquad D_x = \diag(x^{|J|})_{J \in \Ical(G)}.
\]
\end{definition}

\begin{proposition}[Transfer formula]\label{prop:transfer}
For free boundary conditions,
\[
I(G^{\boxtimes d}, x) = \mathbf{1}^T M(x)^{d-1} \mathbf{w},
\]
where $\mathbf{w}(J) = x^{|J|}$.  For periodic boundary conditions,
\[
I(G^{\boxtimes d}, x) = \tr(M(x)^d).
\]
\end{proposition}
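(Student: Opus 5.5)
The plan is to expand both sides as sums over sequences of layers and match them term by term using Proposition~\ref{prop:layered}. By that proposition, an independent set of the layered structure (a strong cylinder for free boundary, a strong torus for periodic) is the same thing as a sequence $(J_1,\dots,J_d)$ with every $J_i\in\Ical(G)$ and every consecutive pair compatible. Its size is $\sum_i |J_i|$. Hence
\[
I(G^{\boxtimes d},x)=\sum_{J_1,\dots,J_d\in\Ical(G)} \prod_{i=1}^{d-1} T[J_i,J_{i+1}]\; x^{|J_1|+\cdots+|J_d|}
\]
in the free case. In the periodic case the extra factor $T[J_d,J_1]$ is inserted. This works because $T[I,J]$ is exactly the indicator of $(I-J)\cap B=\varnothing$, and it restricts the sum to $\Ical(G)^d$ automatically.

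First I would record that $T$ is symmetric. Indeed $B=-B$, so $(I-J)\cap B=\varnothing$ if and only if $(J-I)\cap B=\varnothing$. This lets compatibility constraints be read in either direction along a chain, so the orientation of the sequence is immaterial.

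Next I would expand the matrix expressions entrywise. For the periodic case,
\[
\tr\bigl((TD_x)^d\bigr)=\sum_{J_1,\dots,J_d}\prod_{i=1}^{d} T[J_i,J_{i+1}]\,x^{|J_{i+1}|},
\]
with indices taken mod $d$. Each layer $J_i$ receives exactly one weight $x^{|J_i|}$, and the cyclic chain of $T$-factors encodes the torus compatibility, which gives the periodic formula directly. For the free case I would expand $\mathbf 1^T M(x)^{d-1}\mathbf w$ in the same way, as a sum over chains $J_1,\dots,J_d$ joined by $d-1$ factors $T[J_i,J_{i+1}]$. The leading $\mathbf 1^T$ sums out the free end, and the terminal vector $\mathbf w$ contributes the weight of the opposite end layer. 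The $d-1$ diagonal factors $D_x$ then have to supply the weights of the remaining $d-1$ layers, one each.

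The step I expect to be the main obstacle is this weight bookkeeping in the free case. One must check that each of the $d$ layers receives exactly one factor $x^{|J_i|}$, with no layer weighted twice and none left unweighted. Concretely, $\mathbf w$ must weight one end layer and the $d-1$ copies of $D_x$ must weight the other $d-1$ layers. I would try first to pair each $D_x$ with the layer adjacent to it in the product, using the symmetry of $T$ to reverse the chain where needed. I would then verify the assignment explicitly for $d=1$, where the identity reads $\mathbf 1^T\mathbf w=\sum_J x^{|J|}=I(G,x)$, and for $d=2$, before doing the general induction on $d$. In the induction one passes from $M^{d-1}\mathbf w$ to $M^{d}\mathbf w$ by appending one layer together with its compatibility factor and its weight. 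If the direct pairing fails, this is where the convention for $M(x)$ and $\mathbf w$ has to be pinned down, so this is where the argument will stand or fall.
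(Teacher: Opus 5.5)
The periodic case in your proposal is fine and is the same argument as the paper's one-line ``standard transfer-matrix expansion.'' You also rightly read $G^{\boxtimes d}$ as the strong cylinder $G\boxtimes P_d$ or torus $G\boxtimes C_d$. The trace identity needs $d\ge 2$: since $T[J,J]=0$ for $J\ne\varnothing$, you get $\tr M(x)=1$.

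\textbf{The free case.} The gap is exactly where you suspected, and it cannot be closed. With $M(x)=TD_x$, every $D_x$ sits to the right of a $T$, so
\[
\mathbf 1^T M(x)^{d-1}\mathbf w=\sum_{J_1,\dots,J_d}\prod_{i=1}^{d-1}T[J_i,J_{i+1}]\;x^{|J_2|+\cdots+|J_{d-1}|+2|J_d|}\qquad(d\ge 2).
\]
The copies of $D_x$ weight $J_2,\dots,J_d$, the vector $\mathbf w$ weights $J_d$ a second time, and $J_1$ is never weighted. Reversing the chain via $T=T^T$ does not help. Transposing the scalar gives $\mathbf w^T(D_xT)^{d-1}\mathbf 1$, which has the same defect: one end layer is doubly weighted and the other is unweighted. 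So no pairing argument or induction on $d$ can rescue it.

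Your own planned $d=2$ check already refutes the identity. At $x=0$ you have $D_0\mathbf 1=\mathbf w|_{x=0}=e_\varnothing$, the indicator of the empty set, and $Te_\varnothing=\mathbf 1$. Hence $\mathbf 1^TM(0)^{d-1}\mathbf w=|\Ical(G)|$ for every $d\ge 2$, whereas every independence polynomial has constant term $1$. Concretely, for $C_7$ and $d=2$, the appendix's orbit data give $\mathbf 1^TM(x)\mathbf w=29+56x^2+35x^4+7x^6$. The true polynomial is $I(C_7\boxtimes P_2,x)=1+14x+56x^2+56x^3$. The two agree only at $x=1$.

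\textbf{What to do instead.} The free-boundary formula as stated is false, and the paper's proof, which only invokes the standard expansion, does not notice this. Your expansion argument proves, verbatim, the corrected identity
\[
I(G\boxtimes P_d,x)=\mathbf w^T M(x)^{d-1}\mathbf 1=\mathbf 1^T(D_xT)^{d-1}\mathbf w,
\]
in which each layer receives exactly one weight. This corrected form is also what the appendix actually computes: $\sum_{i,j}|O_i|\,T_{\mathrm{orb}}[i,j]\,x^{w_i+w_j}$, written $\eta^TM_{\mathrm{orb}}\mathbf 1$ in the summary table. The discrepancy vanishes at $x=1$, so statements about $I(\cdot,1)$ and growth rates are unaffected. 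You should state and prove this corrected version rather than try to force the bookkeeping for the literal one.
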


\begin{proof}
Standard transfer-matrix expansion: each compatible sequence of $d$ layer-states contributes $\prod_i x^{|S_i|}$ to the generating function.
\end{proof}

\begin{remark}[Trace formula]
The periodic trace formula $\tr(M(x)^d)$ correctly computes independence polynomials for $d \ge 2$. The case $d=1$ corresponds to a degenerate one-layer torus, the standard $C_n$ graph,and is handled separately.
\end{remark}
%% ────────────────────────────────────────────────────────────
\section{Fourier structure of the compatibility kernel}\label{sec:fourier}
%% ────────────────────────────────────────────────────────────

The discrete Fourier transform of the constraint function $c$ over $\ZZ_n$ provides the bridge between the adjacency spectrum and the transfer spectrum.

Let $\omega = e^{2\pi i/n}$ and $\mu_k = 1 + 2\cos(2\pi k/n)$ be the eigenvalues of the cycle~$C_n$.

\begin{proposition}\label{prop:fourier}
For the strong product kernel $B = \{-1, 0, 1\}$,
\[
\widehat{c}(0) = n - 3, \qquad \widehat{c}(k) = -\mu_k \quad (k \neq 0),
\]
where $\widehat{c}(k) = \sum_{j=0}^{n-1} c(j)\,\omega^{-jk}$.
\end{proposition}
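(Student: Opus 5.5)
The plan is to write $c$ as the constant function minus an indicator and use linearity of the transform. Since $c(j) = \mathbf{1}[j \notin B]$, we have
\[
c = \mathbf{1}_{\ZZ_n} - \mathbf{1}_B, \qquad \widehat{c}(k) = \widehat{\mathbf{1}_{\ZZ_n}}(k) - \widehat{\mathbf{1}_B}(k).
\]
Each piece has an explicit transform, so the proposition reduces to two short computations.

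First, the orthogonality of characters on $\ZZ_n$ gives
\[
\sum_{j=0}^{n-1} \omega^{-jk} = n\,\mathbf{1}[k=0],
\]
because for $k \neq 0$ the sum is a finite geometric series in $\omega^{-k} \neq 1$, and it vanishes since $\omega^{-kn}=1$.

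Second, for $B = \{-1,0,1\}$ (read modulo $n$; the three residues are distinct because $n \ge 3$),
\[
\widehat{\mathbf{1}_B}(k) = \omega^{k} + 1 + \omega^{-k} = 1 + 2\cos(2\pi k/n) = \mu_k .
\]
This holds for every $k$; at $k=0$ it gives $\mu_0 = 3 = |B|$.

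Combining the two pieces: at $k=0$ we get $\widehat{c}(0) = n - 3$, which is simply the count $|\ZZ_n \setminus B|$. For $k \neq 0$ the constant term drops out and $\widehat{c}(k) = -\mu_k$, as claimed.

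There is no real obstacle. The only points needing care are that $-1 \equiv n-1$ is a genuine third element distinct from $0$ and $1$, which is where $n \ge 3$ is used, and that the sign convention $\omega^{-jk}$ does not matter because $B$ is symmetric, so the transform of $\mathbf{1}_B$ is real.
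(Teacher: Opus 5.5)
Your proposal is correct and matches the paper's proof. Both write $\widehat{c}(k) = n\delta_{k,0} - \sum_{j\in B}\omega^{-jk}$ via $c = \mathbf{1}_{\ZZ_n} - \mathbf{1}_B$ and character orthogonality, then evaluate the three-term sum as $1 + \omega^k + \omega^{-k} = \mu_k$.
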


\begin{proof}
We have $\widehat{c}(k) = n\delta_{k,0} - \sum_{j \in B} \omega^{-jk}$.  For $k \neq 0$, $\sum_{j \in B} \omega^{-jk} = 1 + \omega^k + \omega^{-k} = 1 + 2\cos(2\pi k/n) = \mu_k$.
\end{proof}

Thus the Fourier eigenvalues of the constraint function are the negatives of the adjacency eigenvalues.  The $\Dih(n)$ block decomposition (Section~\ref{sec:equivariance}) separates the $k=0$ mode (also known as "the DC component") $\widehat{c}(0) = n - |B| \in \QQ$, that controls the combinatorial sector, from the $k \neq 0$ modes (also known as "the AC component") $\widehat{c}(k) = -\mu_k \in K$, that control the cyclotomic sector, where $K = \QQ(\cos(2\pi/n))$ is the maximal real subfield of the $n$-th cyclotomic field.

\begin{lemma}[Fourier realization of isotypic blocks]
The action of the transfer operator on each isotypic component is determined by the Fourier coefficients $\widehat{c}(k)$. In particular, the trivial component depends only on $\widehat{c}(0)$, while the $\rho_k$ components depend on $\widehat{c}(k)$ and $\widehat{c}(-k)$.
\end{lemma}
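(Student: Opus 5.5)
The plan is to make ``determined by the Fourier coefficients'' precise by passing from the single kernel $c$ to the family of \emph{orbit-correlation functions} of the transfer matrix, of which $c$ is the basic case. Let $\sigma$ denote the rotation $I \mapsto I+1$ on $\Ical(G)$. Since $(I+r)-(J+r) = I-J$, we have $T[\sigma I,\sigma J]=T[I,J]$, and similarly $T[-I,-J]=T[I,J]$ because $B=-B$. So $T$ commutes with the $\Dih(n)$ action, and by Schur's lemma it preserves each isotypic component. For representatives $I,J$ of $\ZZ_n$-orbits, define
\[
f_{I,J}(r) := T[I,\sigma^r J] = \prod_{u\in I}\prod_{v\in J} c(u-v-r), \qquad r\in\ZZ_n .
\]
The key observation is that for singletons $I=\{a\}$, $J=\{b\}$ this gives $f_{I,J}(r)=c(a-b-r)$. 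Thus $c$ is exactly the orbit-correlation function of the one-point states, and $\widehat{f}_{I,J}(k)$ is a phase times $\widehat{c}(-k)$.

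First I will write down the isotypic blocks explicitly. For each orbit representative $J$ and each $k$, set
\[
e_J^{(k)} = \sum_{r} \omega^{kr} \delta_{\sigma^r J},
\]
with the usual modification for orbits with nontrivial stabilizer, where only the $k$ compatible with the stabilizer survive. A direct computation then gives
\[
T e_J^{(k)} = \sum_{I} \widehat{f}_{I,J}(-k)\, e_I^{(k)}
\]
up to the orbit-size normalization. Hence the block of $T$ on the $\omega^k$-eigenspace of $\sigma$ has entries equal to the $k$-th Fourier coefficients of the functions $f_{I,J}$. The reflection $I\mapsto -I$ exchanges the $k$ and $-k$ eigenspaces and satisfies $f_{-I,-J}(r)=f_{I,J}(-r)$. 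Consequently the $\rho_k$ isotypic block, which is the sum of the $\pm k$ eigenspaces, is built from $\widehat{f}_{I,J}(k)$ and $\widehat{f}_{I,J}(-k)$ together. The trivial block consists of the $k=0$ sums $\widehat{f}_{I,J}(0)=\sum_r f_{I,J}(r)$, with the reflection identifying orbits with their mirror images.

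Second, I will read off the statement. The DC coefficient of each correlation function controls the trivial block; on one-point states this coefficient is literally $\widehat{c}(0)=n-|B|$, and in general it is the rational count of compatible rotations. The $\pm k$ coefficients control the $\rho_k$ block; on one-point states these are literally $\widehat{c}(\pm k)=-\mu_k$, by Proposition~\ref{prop:fourier}. I will also record that, by Fourier inversion, $f_{I,J}$ is the product of the shifted copies $c(u-v-\cdot)$. Its Fourier transform is therefore an iterated convolution of copies of $\widehat{c}$, and the $k$-th coefficient collects exactly the frequency tuples summing to $k$. This expresses every block entry through the values of $\widehat{c}$, sorted by total frequency.

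The main obstacle is the phrase ``depends only on $\widehat{c}(0)$'' for states with $|I|,|J|>1$. There the convolution mixes nonzero modes whose frequencies sum to zero, for example $\widehat{c}(k)\widehat{c}(-k)$. So the literal claim holds only in the frequency-graded sense: the trivial block is the zero-total-frequency, DC part of the correlation functions, and the $\rho_k$ block is their total-frequency-$\pm k$ part. I will state the lemma in that sense, verify it on the one-point states where it is literal, and note rationality of the trivial block as a consequence, since the $k=0$ sums are integers.
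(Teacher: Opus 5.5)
Your argument is correct, but it proves a corrected, frequency-graded form of the lemma by a different route from the paper, and that correction is forced.

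\textbf{How your route differs from the paper's.} The paper's sketch identifies $T$ with convolution by $c$ on $\CC[\ZZ_n]$ and reads the blocks off $\widehat{c}$. That identification is literally valid only on the singleton sector, where $T[\{a\},\{b\}]=c(a-b)$ is circulant.

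You instead split $\CC[\Ical(G)]$ along rotation orbits, so $T$ becomes an orbit-indexed matrix of convolutions with kernels $f_{I,J}$. Your formula $Te_J^{(k)}=\sum_I \widehat{f}_{I,J}(-k)\,e_I^{(k)}$ then gives the blocks exactly:
\begin{itemize}
\item $T_{\rho_k}$ is the matrix $\bigl(\widehat{f}_{I,J}(-k)\bigr)$ on a single $\sigma$-eigenspace; the partner eigenspace is conjugate to it via the reflection.
\item $T_{\chi_0}$ is $\bigl(\widehat{f}_{I,J}(0)\bigr)$ restricted to reflection-invariant combinations.
\end{itemize}
The paper's version buys brevity. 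Yours buys an explicit, checkable block formula, and it shows that ``only $\widehat{c}(0)$'' and ``only $\widehat{c}(\pm k)$'' hold literally only on one-point states.

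\textbf{Why the literal wording cannot be proved.} You are right not to attempt it. In $C_7$, for instance,
\[
T_{\mathrm{orb}}[O_1,O_2]=\sum_{s\in\ZZ_7}c(s)\,c(s-2)=\frac{1}{7}\sum_{m}\widehat{c}(m)^2\,\omega^{2m}=\frac{1}{7}\Bigl(16+2\sum_{m=1}^{3}\mu_m^2\cos\tfrac{4\pi m}{7}\Bigr)=\frac{16-2}{7}=2.
\]
So a trivial-block entry genuinely involves the nonzero modes. It is rational because it is an integer count, not because only $\widehat{c}(0)$ enters. More crudely, $C=\{\pm1\}$ and $C=\{\pm2\}$ in $\ZZ_8$ share $\widehat{c}(0)=5$, yet their trivial blocks have sizes $8$ and $7$ (the numbers of $\Dih(8)$-orbits on $47$ and $49$ states).

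\textbf{Two small cleanups.}
\begin{itemize}
\item The frequency mixing already occurs once $|I|\,|J|\ge 2$, not only when both sets have more than one point.
\item For prime $n$ the only rotation-fixed state is $\varnothing$, with $e^{(k)}_{\varnothing}=n\,\delta_{k,0}\,\delta_{\varnothing}$. It therefore enters only the trivial block, and your normalization should be made explicit there.
\end{itemize}
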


\begin{proof}[Sketch]
The transfer operator is defined by convolution of the function $c(u-v)$. Passing to the group algebra $\CC[\ZZ_n]$, convolution diagonalizes under the discrete Fourier transform. The dihedral action organizes these Fourier modes into isotypic components.
\end{proof}
Thus, the compatibility kernel is the complement of the closed neighborhood operator, and its Fourier spectrum is the negation of the adjacency spectrum.
%% ────────────────────────────────────────────────────────────
\section{Equivariance and block decomposition}\label{sec:equivariance}
%% ────────────────────────────────────────────────────────────

The dihedral group $\Dih(n) = D_{2n}$ acts on $\ZZ_n$ by rotations and reflections, hence on $\Ical(G)$.

\begin{proposition}\label{prop:equivariance}
The operators $T$ and $M(x)$ commute with the action of $\Dih(n)$.
\end{proposition}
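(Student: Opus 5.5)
We realize $\Dih(n)$ concretely as the group of affine maps $g(u) = \varepsilon u + t$ on $\ZZ_n$, with $\varepsilon \in \{\pm 1\}$ and $t \in \ZZ_n$. It acts on subsets by $gI = \{g(u) : u \in I\}$, and we let it act on $\RR^{\Ical(G)}$ by the permutation matrices $P_g e_I = e_{gI}$. The claim is that $P_g T = T P_g$ and $P_g M(x) = M(x) P_g$ for every $g$. Since $P_g$ is a permutation matrix, commuting with $T$ is equivalent to the entrywise invariance $T[gI, gJ] = T[I,J]$ for all $I, J \in \Ical(G)$. The plan is to verify three things in turn: the action preserves the state space, the kernel $T$ is invariant, and the diagonal weight $D_x$ is invariant.

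\textbf{Step 1 (well-definedness).} We show that $g$ maps $\Ical(G)$ to itself. Since $g$ is a bijection of $\ZZ_n$, we have $g(u) - g(v) = \varepsilon(u - v)$. Because $C = -C$, this difference lies in $C$ if and only if $u - v \in C$. So $g$ is a graph automorphism of $G = \Cay(\ZZ_n, C)$ and sends independent sets to independent sets. The same computation with $B = C \cup \{0\}$ shows $B = -B$, which is used next.

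\textbf{Step 2 (invariance of $T$).} Using the product form $T[I,J] = \prod_{u \in I}\prod_{v \in J} c(u - v)$, we reindex the product over $gI \times gJ$ by the bijection $g \times g$. This gives
\[
T[gI, gJ] = \prod_{u \in I}\prod_{v \in J} c\bigl(\varepsilon(u - v)\bigr).
\]
Now $c(-j) = c(j)$ because $B = -B$, so the right-hand side equals $T[I,J]$. Equivalently, in Minkowski-difference language, $gI - gJ = \varepsilon(I - J)$, and $B = -B$ gives $(gI - gJ) \cap B = \varnothing$ if and only if $(I - J) \cap B = \varnothing$. Hence $P_g^{-1} T P_g = T$.

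\textbf{Step 3 (invariance of $M(x)$).} Since $g$ is a bijection, $|gJ| = |J|$. Therefore $P_g^{-1} D_x P_g = D_x$, and $M(x) = T D_x$ commutes with $P_g$ as a product of two commuting operators. The same argument shows that the boundary vectors $\mathbf{1}$ and $\mathbf{w}$ of Proposition~\ref{prop:transfer} are $\Dih(n)$-invariant, which will be useful later.

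There is no serious obstacle here. The only point needing care is the symmetry hypothesis $C = -C$: it is what makes reflections, and not just rotations, act compatibly. For rotations alone, invariance of $T$ follows from translation invariance of differences; for reflections it requires $c$ to be even. I would state this dependence explicitly.
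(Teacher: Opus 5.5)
Your proof is correct and follows the same route as the paper: the independence condition and the compatibility condition $(I-J)\cap B=\varnothing$ are invariant under rotations and reflections, and $|gJ|=|J|$ gives commutation with $D_x$. Your version adds detail the paper omits, namely the identity $gI-gJ=\varepsilon(I-J)$ and the explicit use of $C=-C$ (equivalently $B=-B$) to handle reflections.
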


\begin{proof}
Both the independence condition and the compatibility condition $(I-J) \cap B = \varnothing$ are invariant under rotations and reflections of $\ZZ_n$.  The weight $|I|$ is also invariant, so $D_x$ commutes with the action.
\end{proof}

Since $\Dih(n)$ acts on the finite set $\Ical(G)$, the permutation representation $\CC[\Ical(G)]$ decomposes into isotypic components.  For odd prime $n$, $\Dih(n)$ has two one-dimensional irreducible representations ($\chi_0$ trivial, $\chi_1$ sign) and $(n-1)/2$ two-dimensional representations $\rho_k$, $k = 1, \ldots, (n-1)/2$.

\begin{theorem}[Block decomposition]\label{thm:block}
There exists a $\Dih(n)$-equivariant change of basis $\mathcal{F}$ such that
\[
\mathcal{F} T \mathcal{F}^{-1} = \bigoplus_{\rho \in \widehat{\Dih(n)}} \left( T_\rho \otimes I_{\dim \rho} \right), \qquad T_\rho \in \End(\CC^{m_\rho}),
\]
and hence
\[
\chi_T(\lambda) = \prod_\rho \chi_{T_\rho}(\lambda)^{\dim \rho}.
\]
\end{theorem}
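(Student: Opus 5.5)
The plan is to apply Schur's lemma to the permutation representation $V=\CC[\Ical(G)]$ of $\Dih(n)$, using Proposition~\ref{prop:equivariance}: $T$ (and likewise $M(x)$) lies in the commutant $\End_{\Dih(n)}(V)$. First, decompose $V$ by Maschke's theorem ($\CC$ has characteristic zero and $\Dih(n)$ is finite) as
\[
V \cong \bigoplus_{\rho \in \widehat{\Dih(n)}} \CC^{m_\rho} \otimes W_\rho,
\]
where $W_\rho$ is the irreducible module of $\rho$, $m_\rho = \langle \chi_V, \chi_\rho\rangle$ is its multiplicity, and $\CC^{m_\rho}=\mathrm{Hom}_{\Dih(n)}(W_\rho,V)$ is the multiplicity space. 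The isotypic projections
\[
P_\rho = \frac{\dim\rho}{2n}\sum_{g}\overline{\chi_\rho(g)}\,g
\]
are central idempotents in the group algebra, so they commute with $T$. Hence $T$ preserves each isotypic component $V_\rho=P_\rho V$, and $T$ is block diagonal across the $\rho$'s.

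Second, on a single $V_\rho \cong \CC^{m_\rho}\otimes W_\rho$, Schur's lemma gives $\End_{\Dih(n)}(\CC^{m_\rho}\otimes W_\rho) = \End(\CC^{m_\rho})\otimes \CC\,\mathrm{id}_{W_\rho}$. So $T|_{V_\rho} = T_\rho\otimes I_{\dim\rho}$ for a unique $T_\rho\in\End(\CC^{m_\rho})$. Concretely, $T_\rho$ is the action of $T$ by post-composition on $\mathrm{Hom}_{\Dih(n)}(W_\rho,V)$. Choosing a basis of each multiplicity space together with a basis of each $W_\rho$ gives the unitary change of basis $\mathcal F$. The basis vectors for $W_\rho$ can be taken as Fourier vectors $\sum_r \omega^{kr}\,(r\cdot I)$ along rotation orbits, symmetrized under reflection, which is why the basis is "Fourier" and matches the lemma on Fourier realization. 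Reordering the tensor factors then yields exactly the stated form $\mathcal F T\mathcal F^{-1}=\bigoplus_\rho T_\rho\otimes I_{\dim\rho}$.

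Third, the characteristic polynomial identity follows formally. We have $\det(\lambda I - A\otimes I_k)=\det(\lambda I-A)^k$, since $A\otimes I_k$ is permutation-similar to $k$ copies of $A$ on the diagonal, and the determinant is multiplicative over direct sums. Similarity invariance then gives $\chi_T=\prod_\rho \chi_{T_\rho}^{\dim\rho}$. The same argument applies verbatim to $M(x)$, because $D_x$ is also equivariant.

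The main obstacle is bookkeeping rather than depth. One must make sure $\mathcal F$ is genuinely a single change of basis compatible with every $\rho$ at once. This requires the bases of the multiplicity spaces to be chosen consistently: fix intertwiners $\phi_1,\dots,\phi_{m_\rho}$ forming a basis of $\mathrm{Hom}(W_\rho,V)$ and use the vectors $\phi_i(e_j)$. It also requires handling the two-dimensional $\rho_k$ over $\CC$, and over $\RR$ if a rational or real form is desired; these representations are absolutely irreducible, so Schur's lemma applies without complication. For general $n$ (not prime), the list of irreducibles changes; for even $n$ there are four one-dimensional characters, but the argument does not use the explicit list. I would therefore state the proof for arbitrary finite groups acting on the state set and specialize.
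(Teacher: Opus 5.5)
Your proposal is correct and follows the same route as the paper. Equivariance places $T$ in the commutant $\End_{\Dih(n)}(\CC[\Ical(G)])$, and Schur's lemma on each isotypic summand $\CC^{m_\rho}\otimes W_\rho$ forces $T$ to act as $T_\rho\otimes I_{\dim\rho}$. You also supply details that the paper's two-line proof leaves implicit: the central isotypic projectors, the intertwiner basis defining $\mathcal F$, and the identity $\det(\lambda I - A\otimes I_k)=\det(\lambda I - A)^k$ that yields the characteristic-polynomial factorization.
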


\begin{proof}
By Proposition~\ref{prop:equivariance}, $T$ lies in $\End_{\Dih(n)}(\CC[\Ical(G)])$.  By Schur's lemma, on each isotypic summand $\CC^{m_\rho} \otimes V_\rho$, the operator $T$ acts trivially on the irreducible factor $V_\rho$ and nontrivially only on the multiplicity space $\CC^{m_\rho}$.
\end{proof}

\begin{corollary}[Trace decomposition]\label{cor:trace}
For all $d \geq 1$,
\[
\tr(T^d) = \sum_{\rho \in \widehat{\Dih(n)}} (\dim \rho)\, \tr(T_\rho^d).
\]
\end{corollary}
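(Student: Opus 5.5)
The plan is to derive Corollary~\ref{cor:trace} from the block decomposition of Theorem~\ref{thm:block}, using that the trace is invariant under similarity and that powers respect the block structure. By Theorem~\ref{thm:block} there is an invertible change of basis $\mathcal{F}$ with
\[
\mathcal{F} T \mathcal{F}^{-1} = \bigoplus_{\rho} \left( T_\rho \otimes I_{\dim \rho} \right).
\]

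First, I raise both sides to the $d$-th power. Since $\mathcal{F} T^d \mathcal{F}^{-1} = (\mathcal{F} T \mathcal{F}^{-1})^d$, and powers of a block-diagonal operator are computed blockwise, the right side becomes $\bigoplus_\rho (T_\rho \otimes I_{\dim\rho})^d$. The mixed-product property of the Kronecker product gives $(A\otimes B)^d = A^d \otimes B^d$, so each block is
\[
(T_\rho \otimes I_{\dim\rho})^d = T_\rho^d \otimes I_{\dim\rho}.
\]

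Second, I take traces. Trace is similarity-invariant, so $\tr(T^d) = \tr(\mathcal{F} T^d \mathcal{F}^{-1})$. Trace is additive over direct sums. It is also multiplicative over tensor products, $\tr(A\otimes B) = \tr A \cdot \tr B$, and $\tr I_{\dim\rho} = \dim\rho$. Together these give $\tr(T^d) = \sum_\rho (\dim\rho)\,\tr(T_\rho^d)$.

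If an isotypic component is absent ($m_\rho = 0$), its term is simply zero, since $T_\rho$ acts on the zero space; so the sum may safely range over all of $\widehat{\Dih(n)}$. There is no real obstacle here. The only point needing care is that Theorem~\ref{thm:block} provides an honest similarity, which holds because $\mathcal{F}$ is invertible. An alternative check is to expand $\chi_T(\lambda) = \prod_\rho \chi_{T_\rho}(\lambda)^{\dim\rho}$: the eigenvalues of $T$ with multiplicity are those of the $T_\rho$, each repeated $\dim\rho$ times, and $\tr(T^d)$ is the sum of $d$-th powers of the eigenvalues. The same argument applies verbatim to $M(x)$, since Proposition~\ref{prop:equivariance} shows it is also equivariant.
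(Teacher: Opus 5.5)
Your proposal is correct and matches the paper's intended argument: the paper gives no separate proof, treating the statement as an immediate consequence of Theorem~\ref{thm:block}. Conjugating $T^d$ by $\mathcal{F}$, computing powers blockwise via $(T_\rho\otimes I_{\dim\rho})^d = T_\rho^d\otimes I_{\dim\rho}$, and taking traces with $\tr(A\otimes B)=\tr A\,\tr B$ is exactly how it follows.
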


\begin{definition}
The factor arising from the trivial isotypic component is called the \emph{anomalous factor}.
\end{definition}

\begin{remark}[Arithmetic disjointness of sectors]
The two sectors see different Fourier modes of the constraint.  The $\rho_k$ blocks have entries built from $\widehat{c}(k) = -\mu_k \in K$, forcing their characteristic polynomials to have coefficients in $K$.  The $\chi_0$ block has entries built from $\widehat{c}(0) = n - |B| \in \QQ$---unweighted orbit-compatibility counts---so its characteristic polynomial has rational coefficients with no mechanism forcing its roots into $K$.

Concretely, for the orbit pair (singletons, singletons) in $C_7$, the rotation-count vector $c_\ell = \mathbf{1}[\ell \notin B]$ is $(0,0,1,1,1,1,0)$.  The $\chi_0$ block entry is $\sum_\ell c_\ell = 4$, an integer.  The $\rho_k$ block entry is $\sum_\ell c_\ell \cdot 2\cos(2\pi \ell k/n) = -2\mu_k$, a cyclotomic number built from the adjacency eigenvalues.
\end{remark}

%% ────────────────────────────────────────────────────────────
\section{Combinatorial dominance}\label{sec:dominance}
%% ────────────────────────────────────────────────────────────

\begin{theorem}[Combinatorial dominance]\label{thm:dominance}
The spectral radius satisfies $\rho(T) = \rho(T_{\chi_0})$.
\end{theorem}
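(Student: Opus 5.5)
Since $T$ is a nonnegative symmetric matrix ($T[I,J]=T[J,I]$ because $B=-B$), I will use Perron--Frobenius theory together with the block decomposition of Theorem~\ref{thm:block}. The target is to show that $\rho(T)$ is an eigenvalue with an eigenvector that is nonnegative and, after averaging, $\Dih(n)$-invariant. Such an eigenvector lies in the trivial isotypic component, so it is an eigenvector of $T_{\chi_0}$ with eigenvalue $\rho(T)$. The reverse inequality $\rho(T_{\chi_0})\le\rho(T)$ is automatic, because by Theorem~\ref{thm:block} every eigenvalue of $T_{\chi_0}$ is an eigenvalue of $T$.

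\textbf{Step 1: a nonnegative Perron vector.} Because $T\ge 0$ entrywise, Perron--Frobenius (in its weak form, which needs no irreducibility) gives a nonzero $v\ge 0$ with $Tv=\rho(T)v$. Since $T$ is symmetric, $\rho(T)=\max_{\|u\|=1}u^T T u$, and the maximizer can be taken nonnegative: replacing $u$ by $|u|$ does not decrease $u^TTu$ when $T\ge0$.

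\textbf{Step 2: symmetrization.} Define $\bar v=\frac{1}{2n}\sum_{g\in\Dih(n)} g\cdot v$. By Proposition~\ref{prop:equivariance}, each $g\cdot v$ is again a $\rho(T)$-eigenvector, so $\bar v$ is one too. Also $\bar v\ge0$. Its coordinate sum equals that of $v$, which is positive, so $\bar v\ne0$. This is the only delicate point, since an average of eigenvectors could in principle cancel to zero, and nonnegativity is exactly what rules that out. Then $\bar v$ is $\Dih(n)$-fixed, so it lies in the $\chi_0$ isotypic summand.

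\textbf{Step 3: conclusion.} On the trivial summand, which is spanned by orbit indicator vectors, $T$ acts as $T_{\chi_0}$, the orbit-compressed matrix. Hence $\rho(T)$ is an eigenvalue of $T_{\chi_0}$, and therefore $\rho(T)\le\rho(T_{\chi_0})$.

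I expect the main obstacle to be bookkeeping rather than analysis. The $\chi_0$ block should be identified concretely with the orbit-compressed operator, whose entries $\sum_{J\in\mathcal O'}T[I,J]$ give a nonnegative, generally non-symmetric matrix. The argument also transfers to $M(x)$ for $x>0$, since $D_x$ is positive diagonal and $\Dih(n)$-invariant. If strict dominance is wanted, I would add irreducibility: $\varnothing$ is compatible with every independent set, so $T$ is irreducible and primitive, and the Perron vector is strictly positive and unique. Any other block would then have to contain that invariant vector, which is impossible, so its spectral radius is strictly smaller.
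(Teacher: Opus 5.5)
Your proof is correct and follows the paper's route: take a Perron eigenvector for $\rho(T)$, average it over $\Dih(n)$ using equivariance, and note that the nonzero invariant average lies in the trivial isotypic component. The differences are minor. You use the weak Perron--Frobenius theorem plus a coordinate-sum argument to rule out cancellation, where the paper uses irreducibility (via $\varnothing$) to get a strictly positive vector. You also state the reverse inequality $\rho(T_{\chi_0})\le\rho(T)$ explicitly, which the paper leaves implicit.
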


\begin{proof}
The matrix $T$ is nonnegative and irreducible, since any state connects to $\varnothing$ and $\varnothing$ connects to all states. By Perron--Frobenius, there exists a strictly positive eigenvector $v$ with eigenvalue $\rho(T)$. Averaging over $\Dih(n)$,
\[
\bar{v} = \frac{1}{|\Dih(n)|}\sum_{g} g\cdot v
\]
remains positive and satisfies $T\bar{v}=\rho(T)\bar{v}$. Since $\bar{v}$ is invariant, it lies in $V_{\chi_0}$, so $\rho(T)=\rho(T_{\chi_0})$.
\end{proof}

The weight vector $w(I) = |I|$ is $\Dih(n)$-invariant, so it lies entirely in the trivial isotypic component $V_{\chi_0}$. (In the sign representation $V_{\chi_1}$, reflections act by $-1$, so any invariant vector projects to zero there.) Thus the dominant growth of the independence polynomial is governed by the anomalous sector, with the cyclotomic sector providing a subdominant correction.

\begin{corollary}[Asymptotic growth]\label{cor:growth}
$\displaystyle\lim_{d \to \infty} I(G^{\boxtimes d}, 1)^{1/d} = \rho(T)$.
\end{corollary}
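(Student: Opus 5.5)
The plan is to express $I(G^{\boxtimes d},1)$, interpreted through Proposition~\ref{prop:transfer} as the strong-cylinder count, as a bilinear form in powers of $T$, and then apply Perron--Frobenius. At $x=1$ we have $D_1 = I$, so $M(1)=T$ and $\mathbf{w}=\mathbf{1}$. Proposition~\ref{prop:transfer} then gives
\[
I(G^{\boxtimes d},1) = \mathbf{1}^T T^{d-1}\mathbf{1}.
\]
(The periodic version $\tr(T^d)$ can be handled the same way.) It therefore suffices to show that $(\mathbf{1}^T T^{d-1}\mathbf{1})^{1/d}\to\rho(T)$.

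\textbf{Upper bound.} Let $N=|\Ical(G)|$ and write $\|\cdot\|_\infty$ for the max-row-sum norm. Then
\[
\mathbf{1}^T T^{d-1}\mathbf{1} \le N\,\|T^{d-1}\|_\infty,
\]
and Gelfand's formula $\|T^{k}\|^{1/k}\to\rho(T)$ gives $\limsup \le \rho(T)$, since $N^{1/d}\to 1$.

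\textbf{Lower bound.} Here I would use the strictly positive Perron vector $v$ from the proof of Theorem~\ref{thm:dominance}; the irreducibility argument there through $\varnothing$ applies. Normalize $v$ so that $\max_I v(I)=1$ and set $m=\min_I v(I)>0$. Then $m\,v\le \mathbf{1}$ entrywise, and since $T$ is nonnegative,
\[
\mathbf{1}^T T^{d-1}\mathbf{1} \ge \mathbf{1}^T T^{d-1}(m\,v) = m\,\rho(T)^{d-1}\,\mathbf{1}^T v \ge m\,\rho(T)^{d-1}.
\]
Taking $d$-th roots gives $\liminf\ge\rho(T)$. Combined with the upper bound, the limit exists and equals $\rho(T)$. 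By Theorem~\ref{thm:dominance} this is also $\rho(T_{\chi_0})$.

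\textbf{Main obstacle: interpreting the statement.} The step I expect to be hardest is not the analysis but making sure the statement is read correctly. Literally, $G^{\boxtimes d}$ is the full $d$-fold strong power. Proposition~\ref{prop:layered} only encodes compatibility between consecutive coordinates: it counts independent sets of the cylinder $G\boxtimes P_d$, whose vertex set is $\ZZ_n\times\{1,\dots,d\}$, and a subset $S_i$ is a row of that cylinder, not a slice of $\ZZ_n^d$. I would therefore state explicitly that the corollary is understood with $I(G^{\boxtimes d},x)$ meaning the cylinder polynomial given by Proposition~\ref{prop:transfer}, following the geometric-interpretation remark. For the literal $d$-fold power the count grows doubly exponentially in $d$, so no such limit could hold.

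For the periodic version, the trace satisfies $\tr(T^d)\le N\|T^d\|_\infty$, which gives the upper bound. For the lower bound, $\tr(T^d)\ge \rho(T)^d$ needs a little care because of possible periodicity. $T$ is aperiodic, since $T[\varnothing,\varnothing]=1$, so $T$ is primitive and $T^d/\rho^d$ converges to a positive rank-one projection whose trace is $1$. This gives $\tr(T^d)\sim\rho(T)^d$.
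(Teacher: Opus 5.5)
Your proof is correct and is essentially the argument the paper leaves implicit. The paper gives no separate proof and treats the corollary as immediate from Proposition~\ref{prop:transfer} at $x=1$ together with the irreducibility and Perron--Frobenius setup in the proof of Theorem~\ref{thm:dominance}; your Gelfand upper bound, your Perron-vector lower bound, and primitivity via $T[\varnothing,\varnothing]=1$ for the trace supply exactly those omitted details. Your reading of $I(G^{\boxtimes d},1)$ as the strong-cylinder (or torus) count is the one the paper itself uses in Section~7, where it writes $I(G\boxtimes P_d,1)\sim\rho(T)^d$, and it is necessary, since the statement is false for the literal $d$-fold strong power.
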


\begin{corollary}[Sector decomposition of the trace]
\[
\tr(T^d) = \tr(T_{\chi_0}^d) + \sum_{\rho \neq \chi_0} (\dim \rho)\,\tr(T_\rho^d),
\]
with the second sum subdominant as $d \to \infty$.
\end{corollary}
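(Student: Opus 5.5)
The identity itself is a direct specialization of Corollary~\ref{cor:trace}. The trivial representation $\chi_0$ has $\dim \chi_0 = 1$, so we isolate its term in the sum $\tr(T^d) = \sum_\rho (\dim\rho)\,\tr(T_\rho^d)$. This gives exactly the displayed formula. All the content is in the subdominance claim. I will interpret it as
\[
\sum_{\rho\neq\chi_0}(\dim\rho)\,\tr(T_\rho^d) = O(r^d) \quad\text{for some } r<\rho(T),
\]
while $\tr(T_{\chi_0}^d)\sim \rho(T)^d$.

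The plan is to upgrade the Perron--Frobenius argument in the proof of Theorem~\ref{thm:dominance} from irreducibility to primitivity. First, $T$ is irreducible, as noted there: $\varnothing$ is compatible with every state in both directions. Second, $T[\varnothing,\varnothing]=1$, so the digraph of $T$ has a loop; an irreducible nonnegative matrix with positive diagonal entry is primitive. (Alternatively, $T$ is symmetric because $B=-B$, so its spectrum is real. Primitivity then rules out $-\rho(T)$ as an eigenvalue.) By Perron--Frobenius for primitive matrices, $\rho(T)$ is an algebraically simple eigenvalue, and every other eigenvalue $\lambda$ satisfies $|\lambda|<\rho(T)$.

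Next I locate this simple eigenvalue in the block decomposition of Theorem~\ref{thm:block}. The factorization $\chi_T(\lambda)=\prod_\rho \chi_{T_\rho}(\lambda)^{\dim\rho}$ shows that the eigenvalues of $T$, counted with algebraic multiplicity, are the eigenvalues of the blocks $T_\rho$, each repeated $\dim\rho$ times. By Theorem~\ref{thm:dominance}, $\rho(T)$ is an eigenvalue of $T_{\chi_0}$. It is simple in $T$ and $\dim\chi_0=1$, so it appears exactly once, in $T_{\chi_0}$. It cannot be an eigenvalue of any $T_\rho$ with $\rho\ne\chi_0$, since that would give multiplicity at least $2$ in $\chi_T$.

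Hence every eigenvalue of every $T_\rho$ with $\rho\neq\chi_0$ has modulus at most $r:=\max\{|\lambda| : \lambda \text{ an eigenvalue of } T,\ \lambda\neq\rho(T)\}<\rho(T)$. The same holds for the non-Perron eigenvalues of $T_{\chi_0}$. We then have $|\tr(T_\rho^d)|\le m_\rho r^d$ for $\rho\neq\chi_0$, while $\tr(T_{\chi_0}^d)=\rho(T)^d+O(m_{\chi_0} r^d)$. This gives the claimed subdominance, with exponential rate $r/\rho(T)<1$.

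The main obstacle is the simplicity and strict-gap step. Irreducibility alone, which is all the proof of Theorem~\ref{thm:dominance} uses, would permit other eigenvalues on the circle $|\lambda|=\rho(T)$, possibly lying in nontrivial sectors. The loop at $\varnothing$ (or, equivalently, the symmetry of $T$ together with primitivity) is the ingredient that closes this gap. The multiplicity bookkeeping through $\chi_T$ then prevents the Perron root from leaking into the $\rho_k$ or $\chi_1$ blocks.
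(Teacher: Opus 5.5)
Your proof is correct, and it is more complete than what the paper offers. The paper gives no proof of this corollary; it rests implicitly on Corollary~\ref{cor:trace} and Theorem~\ref{thm:dominance}. The averaging argument in Theorem~\ref{thm:dominance} only shows that $\rho(T)$ is an eigenvalue of $T_{\chi_0}$, that is, $\rho(T_\rho)\le\rho(T)=\rho(T_{\chi_0})$ for every $\rho$. That alone does not make the nontrivial sectors subdominant: a nontrivial block could still carry $\rho(T)$ a second time, or $-\rho(T)$, or some other eigenvalue of modulus $\rho(T)$.

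You close exactly this gap. The loop $T[\varnothing,\varnothing]=1$ makes $T$ primitive, which gives a simple Perron root with a strict modulus gap. The factorization $\chi_T=\prod_\rho\chi_{T_\rho}^{\dim\rho}$ then confines that root to $T_{\chi_0}$. The payoff is an explicit exponential rate $O\bigl((r/\rho(T))^d\bigr)$ for the correction, rather than a bare assertion of subdominance.

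Two small refinements:
\begin{itemize}
\item Algebraic simplicity of $\rho(T)$ already follows from irreducibility alone. So the multiplicity bookkeeping that keeps $\rho(T)$ out of the $\chi_1$ and $\rho_k$ blocks does not need the loop; the loop is needed only to rule out the other eigenvalues of modulus $\rho(T)$.
\item Your ``alternative'' via the symmetry of $T$ is not independent of the loop. For a symmetric irreducible nonnegative matrix, $-\rho(T)$ is an eigenvalue precisely when the compatibility graph is bipartite, and the loop at $\varnothing$ is what excludes that.
\end{itemize}
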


%% ────────────────────────────────────────────────────────────
\section{Factorization for odd primes}\label{sec:determining}
%% ────────────────────────────────────────────────────────────

\begin{theorem}[Characteristic polynomial factorization]\label{thm:factorization}
For $n$ an odd prime and $G = C_n$ with strong-product connection set,
\[
\chi_T(\lambda) = \lambda^\nu \cdot f_{\mathrm{anom}}(\lambda) \cdot f_{\mathrm{cyc}}(\lambda)^2,
\]
where:
\begin{enumerate}
\item $f_{\mathrm{anom}} \in \QQ[\lambda]$ is the characteristic polynomial of $T_{\chi_0}$ modulo its kernel (the \emph{anomalous factor});
\item $f_{\mathrm{cyc}} \in \QQ[\lambda]$ splits into $(n-1)/2$ factors over $K = \QQ(\cos(2\pi/n))$ (the \emph{cyclotomic factor});
\item $\nu = L_n - \deg(f_{\mathrm{anom}}) - 2\deg(f_{\mathrm{cyc}})$ accounts for the kernel.
\end{enumerate}
The factors arise from distinct algebraic sources (rational vs.\ cyclotomic Fourier modes).
\end{theorem}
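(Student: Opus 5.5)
The plan is to read off the factorization from Theorem~\ref{thm:block} and then establish rationality and splitting by a Galois argument. For $n$ an odd prime, Theorem~\ref{thm:block} gives
\[
\chi_T(\lambda) = \chi_{T_{\chi_0}}(\lambda)\,\chi_{T_{\chi_1}}(\lambda)\prod_{k=1}^{(n-1)/2}\chi_{T_{\rho_k}}(\lambda)^2 .
\]
I will pull every power of $\lambda$ out of each factor and collect them into $\lambda^\nu$. That leaves three claims to prove:
\begin{enumerate}
\item the reduced $\chi_0$ factor is rational;
\item the product of the reduced $\rho_k$ factors is rational and splits over $K$ into the $(n-1)/2$ factors $\chi_{T_{\rho_k}}$;
\item the sign sector contributes no new nonzero eigenvalues.
\end{enumerate}
The degree count $\nu = L_n - \deg f_{\mathrm{anom}} - 2\deg f_{\mathrm{cyc}}$ is then bookkeeping, using $L_n = |\Ical(C_n)| = \sum_\rho m_\rho \dim\rho$.

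Rationality of the $\chi_0$ factor is direct. A basis of $V_{\chi_0}$ is given by the orbit indicator vectors $\mathbf{1}_{\mathcal{O}}$, and in this basis $T_{\chi_0}$ is the orbit-compressed matrix with integer entries $\#\{J\in\mathcal{O}' : T[I,J]=1\}$ for a fixed $I\in\mathcal{O}$. Hence $\chi_{T_{\chi_0}}\in\ZZ[\lambda]$. The same argument applies to $\chi_1$, since $\chi_1$ is a rational character and the projector $\frac{1}{2n}\sum_g \chi_1(g)g$ has rational entries. So $\chi_{T_{\chi_1}}\in\QQ[\lambda]$ in any case.

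For the cyclotomic sector, I will first check that $m_{\rho_k}$ is independent of $k$. By Frobenius reciprocity, $m_{\rho_k}$ is a sum over orbits of $\dim \rho_k^{H}$, where $H$ is the stabilizer of the orbit. For $n$ prime, the only possible stabilizers are the trivial group, a single reflection, or all of $\Dih(n)$; the last occurs only for $\varnothing$, because $\ZZ_n$ itself is not independent. The dimension $\dim\rho_k^H$ does not depend on $k$ in any of these cases.

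Next, the isotypic projector $P_k=\frac{2}{2n}\sum_g \chi_{\rho_k}(g)\,g$ has entries in $K$. Any $\sigma\in\Gal(K/\QQ)$ sends $2\cos(2\pi j/n)$ to $2\cos(2\pi aj/n)$, and therefore maps $P_k$ to $P_{ak}$. Because $T$ has rational entries and commutes with the group action, $\sigma$ carries $\chi(T|_{\mathrm{im}P_k})$ to $\chi(T|_{\mathrm{im}P_{ak}})$. So $\chi_{T_{\rho_k}}\in K[\lambda]$, the factors form a single Galois orbit as $k$ varies, and their product
\[
f_{\mathrm{cyc}}=\prod_k \chi_{T_{\rho_k}}
\]
(with $\lambda$-powers removed) is fixed by $\Gal(K/\QQ)$, hence lies in $\QQ[\lambda]$. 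This yields exactly the splitting into $(n-1)/2$ factors over $K$ claimed in item~(2).

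I expect the main obstacle to be the sign representation $\chi_1$, which appears with exponent $1$. It is absent from the stated shape $\lambda^\nu f_{\mathrm{anom}} f_{\mathrm{cyc}}^2$. By Frobenius reciprocity, $m_{\chi_1}$ equals the number of orbits whose stabilizer contains no reflection. For $n$ prime these are exactly the orbits of independent sets with no reflection symmetry. I would first try to show that $m_{\chi_1}=0$ by checking that every independent set of $C_n$ is symmetric under some reflection. For $C_7$ this holds by inspection: the independent sets have size at most $3$, and for instance $\{0,2,4\}$ and $\{0,2,5\}$ are both reflection-symmetric. For general prime $n$ this fails once $\lfloor n/2\rfloor$ is large. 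In that case the fallback is to note that $\chi_{T_{\chi_1}}$ is rational and absorb its nonzero part into $f_{\mathrm{anom}}$, interpreting the anomalous factor as the full one-dimensional-character sector. The alternative is to prove that $T$ annihilates $V_{\chi_1}$, for example by showing that $T\mathbf{1}$-type row structure forces $T P_{\chi_1}=0$; I would test this numerically before committing to it.
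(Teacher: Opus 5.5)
Your treatment of $\chi_0$, the rationality of $\chi_{T_{\chi_1}}$, and the cyclotomic sector is correct. The step you flagged as the main obstacle, however, is a genuine gap, and neither of your exits closes it for the statement as written. The paper's proof dismisses $\chi_1$ by asserting $m_{\chi_1}=0$ ``for the small primes considered.'' As you suspected, this is false. $m_{\chi_1}$ counts chiral orbits: it is $3$ for $n=11$ (gap types $\{2,3,6\}$, $\{2,4,5\}$, $(2,2,3,4)$) and $10$ for $n=13$.

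If a reflection $s$ fixes $I$, then $(Tv)(I)=(Tv)(sI)=-(Tv)(I)$ for $v\in V_{\chi_1}$, so only chiral rows matter. At $n=11$ every chiral state leaves room only for compatible sets of size $\le 2$, which are achiral. So $T_{\chi_1}=0$ and no harm is done.

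At $n=13$ your option of showing $TP_{\chi_1}=0$ fails. Call a chiral $3$-set positive if its gaps $a<b<c$ occur in cyclic order $(a,b,c)$, and use the basis $\mathbf{1}_{O^+}-\mathbf{1}_{O^-}$. The only chiral sets compatible with $\{0,2,6\}$ are the negative sets $\{4,8,10\},\{4,8,11\},\{4,9,11\}$, of types $\{2,4,7\},\{3,4,6\},\{2,5,6\}$. For $\{0,2,7\}$ they are $\{4,9,11\},\{5,9,11\}$, and for $\{0,3,7\}$ only $\{5,9,11\}$. The other seven chiral orbits admit only achiral partners. In the order $\{2,4,7\},\{2,5,6\},\{3,4,6\}$,
\[
T_{\chi_1}\simeq\begin{pmatrix}-1&-1&-1\\-1&-1&0\\-1&0&0\end{pmatrix}\oplus 0_{7},\qquad \chi_{T_{\chi_1}}(\lambda)=\lambda^{7}(\lambda^{3}+2\lambda^{2}-\lambda-1).
\]

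The cubic is irreducible over $\QQ$, and it is exactly the extra factor in the paper's table for $n=13$. Suppose $f_{\mathrm{anom}}$ is the reduced characteristic polynomial of $T_{\chi_0}$ alone, as item (1) says. Then $\chi_T/(\lambda^\nu f_{\mathrm{anom}})$ contains this cubic to an odd power, so it cannot be a square $f_{\mathrm{cyc}}^2$. The theorem as literally stated therefore fails at $n=13$. The paper's proof has the same hole as your plan, hidden behind an incorrect vanishing claim.

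The correct statement is your fallback: take $f_{\mathrm{anom}}$ to be the nonzero part of $\chi_{T_{\chi_0}}\chi_{T_{\chi_1}}$. This is what the paper's table actually computes, and it is what its phrase ``the one-dimensional blocks contribute $f_{\mathrm{anom}}$'' amounts to. With that amendment your argument is a complete proof.

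Your conjugation $\sigma(P_k)=P_{ak}$ also supplies what the paper only asserts (``follows from the Fourier analysis''). One small step is missing there. The conjugation first yields $\chi_{T_{\rho_k}}^2\in K[\lambda]$, and to conclude $\chi_{T_{\rho_k}}\in K[\lambda]$ you need either uniqueness of monic square roots or restriction to a reflection's fixed space inside $\mathrm{im}P_k$.
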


\begin{proof}
The factorization follows from Theorem~\ref{thm:block} and the dimension formula for $\Dih(n)$ irreps.  The one-dimensional blocks contribute $f_{\mathrm{anom}}$ (the multiplicity $m_{\chi_1}$ equals the number of rotation orbits in $\Ical(G)$ that are not fixed setwise by reflection; for the small primes considered here this vanishes, so only the $\chi_0$ block is present). The two-dimensional blocks contribute $f_{\mathrm{cyc}}^2$ (the exponent 2 reflecting $\dim \rho_k = 2$).  The arithmetic disjointness follows from the Fourier analysis: $f_{\mathrm{anom}}$ arises from $\widehat{c}(0) \in \QQ$, while $f_{\mathrm{cyc}}$ arises from $\widehat{c}(k) = -\mu_k \in K$.
\end{proof}

\begin{remark}
In computed cases (e.g.\ $C_7$), the splitting fields of $f_{\mathrm{anom}}$ and $f_{\mathrm{cyc}}$ are arithmetically disjoint.
\end{remark}

\begin{remark}
The factorization data for small primes:

\medskip
\begin{tabular}{ccccc}
\toprule
$n$ & $\ker$ & $f_{\mathrm{anom}}$ & $\deg(f_{\mathrm{cyc}})$ & $f_{\mathrm{cyc}}$ over $K$ \\
\midrule
5 & 4 & $(x-1)(x^2-2x-10)$ & 2 & $[1,1]$ \\
7 & 13 & $x^4-5x^3-29x^2+47x+42$ & 6 & $[2,2,2]$ \\
11 & 87 & irred.\ deg.\ 12 & 50 & $[10]^5$ \\
13 & 246 & $(x^3+2x^2-x-1) \cdot (\text{irred.\ deg.\ }20)$ & 126 & $[21]^6$ \\
\bottomrule
\end{tabular}
\end{remark}

%% ────────────────────────────────────────────────────────────
\section{Independence polynomials}\label{sec:indpoly}
%% ────────────────────────────────────────────────────────────

\subsection{Strip vs.\ torus}

The free-boundary (\emph{strip}) polynomial $I_{\mathrm{strip}}(d,x) = \mathbf{1}^T M(x)^{d-1} \mathbf{w}$ projects onto the trivial isotypic component (both boundary vectors are $\Dih(n)$-invariant), giving a computation of size $m(\chi_0) \times m(\chi_0)$.

The periodic-boundary (\emph{torus}) polynomial $I_{\mathrm{torus}}(d,x) = \tr(M(x)^d)$ sums over all sectors.

\subsection{Sector decomposition}

For $C_7^{\boxtimes 2}$, the torus polynomial decomposes as $I = I_{\mathrm{anom}} + I_{\mathrm{cyc}}$.  The cyclotomic correction is sparse (4 nonzero terms), negative, and confined to weight $k \geq n = 7$.  Below weight $n$, the anomalous sector alone gives the exact answer.

At the leading coefficient $k = \alpha = 10$, the anomalous sector contributes 3500, the cyclotomic sector subtracts 2520, yielding $a_{10} = 980 = 20 \cdot 7^2$.

\begin{proposition}[High-weight cyclotomic correction]
For $C_7^{\boxtimes 2}$, the cyclotomic contribution to the independence polynomial is supported only in weights $k \ge 7$.
\end{proposition}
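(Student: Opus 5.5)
The plan is to make the sector split of the torus polynomial explicit with the trace decomposition (Corollary~\ref{cor:trace}), applied to $M(x)$ rather than $T$. Since $D_x$ commutes with $\Dih(7)$ (Proposition~\ref{prop:equivariance}), Theorem~\ref{thm:block} applies verbatim to $M(x)$. The trivial-isotypic projector is $P_0=\frac{1}{14}\sum_{g\in\Dih(7)}g$. Hence
\[
I_{\mathrm{anom}}(x)=\tr\bigl(P_0M(x)^2\bigr)=\frac{1}{14}\sum_{g}\ \sum_{I,J\in\Ical(C_7)} T[gI,J]\,T[J,I]\,x^{|I|+|J|},
\]
and $I_{\mathrm{cyc}}=\tr(M(x)^2)-I_{\mathrm{anom}}$. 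The $\chi_1$ block is absent, as noted in the proof of Theorem~\ref{thm:factorization}, so $I_{\mathrm{cyc}}$ is exactly the $\rho_k$ contribution. Extracting the coefficient of $x^k$ gives
\[
[x^k]\,I_{\mathrm{cyc}}=\frac{1}{14}\sum_{g}\ \sum_{|I|+|J|=k}\Bigl(T[I,J]\,T[J,I]-T[gI,J]\,T[J,I]\Bigr).
\]
The goal is therefore to show that, for $k\le 6$ and every $g$, the number of pairs compatible with $(I,I)$ equals the number compatible with $(gI,I)$, summed over weight-$k$ pairs.

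Step one is to group the sum by the orbit of $I$. For fixed $I$, the inner quantity is
\[
N_g(I)=\#\{J:\ J \text{ compatible with both } I \text{ and } gI\}
\]
(compatibility is symmetric because $B=-B$), weighted by $x^{|J|}$. The claim then becomes
\[
\sum_{|I|+|J|=k}\bigl(\mathbf{1}[J\sim I]-\mathbf{1}[J\sim I,\ J\sim gI]\bigr)=0 \quad\text{for } k<7.
\]
Step two is to enumerate by $(|I|,|J|)$ with $|I|+|J|\le 6$. Since $\alpha(C_7)=3$, the possible pairs are $(a,b)$ with $a,b\le 3$. For each pair I would use the orbit representatives: $\varnothing$, singletons, the two orbits of 2-sets (gap $2$ and gap $3$), and the single 3-orbit. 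For each representative I would compute which $J$ avoid $I+B$, that is, $J\subseteq \ZZ_7\setminus(I+B)$.

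The mechanism I expect to make the difference vanish is a swap. A $J$ compatible with $I$ but not with $gI$ should be paired, by the reflection or rotation carrying $gI$ back toward $I$, with a $J'$ compatible with $gI$ but not with $I$. The weighted totals then agree once summed over $I$ in the orbit, because $\sum_{I\in\mathcal O}$ is $g$-invariant. This cancellation can only fail when $I$ and $J$ jointly "wrap around" $\ZZ_7$, which needs $|I+B|+|J|$ large. A careful count should show that the first failure is at total weight $7$, for example $|I|=3$ and $|J|=4$ is impossible, so the first failures involve $(3,3)$ or larger pairs in the $(I,J,gI)$ configuration.

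The main obstacle is this last step: proving, rather than merely checking, that no weight-$\le 6$ configuration breaks the orbit-averaging symmetry. My first attempt would be the explicit finite enumeration over the $5$ orbit types and the $14$ group elements, organized by the $5\times5$ orbit-compressed matrix. This amounts to computing the bivariate refinement of $T_{\chi_0}$ and of the full $T$ and comparing coefficients through weight $6$. It is finite and mechanical, and it is consistent with the reported data: the four nonzero correction terms all lie at weights $7$ to $10$, including $-2520$ at $k=10$. If the structural swap argument resists a clean formulation, I would present the proof as this exhaustive orbit-level computation.
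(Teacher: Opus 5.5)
\textbf{The gap: you are computing the wrong trace.} In the trace formula the exponent is the number of layers, not the strong-power index. $C_7^{\boxtimes 2}=C_7\boxtimes C_7$ is the strong torus whose transfer direction is itself a $7$-cycle, so its independence polynomial is $\tr(M(x)^7)$, as the paper uses in the appendix. It is not $\tr(M(x)^2)$.

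Since $T$ is a symmetric $0/1$ matrix, $\tr(M(x)^2)=\sum_{I,J}T[I,J]\,x^{|I|+|J|}=1+14x+56x^2+56x^3$, which is the polynomial of $C_7\boxtimes K_2$. Its linear coefficient is $14$ rather than $49$, and its degree is $3$. So it cannot contain the weight-$7$ to $10$ terms you say your computation matches.

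Worse, for this object the identity you set out to prove is false. Take orbits $O_i,O_j$ of size $7$. Their pairs contribute $7T_{\mathrm{orb}}[i,j]\,x^{w_i+w_j}$ to $\tr(M(x)^2)$ and $T_{\mathrm{orb}}[i,j]^2\,x^{w_i+w_j}$ to $\tr(P_0M(x)^2)$. Terms involving $O_0$ agree. Hence your $I_{\mathrm{cyc}}$ equals $\sum_{i,j\ge 1}T_{\mathrm{orb}}[i,j]\bigl(7-T_{\mathrm{orb}}[i,j]\bigr)x^{w_i+w_j}=12x^2+32x^3$. Directly from your own formula at $k=2$, the singleton pairs give $28-16=12\neq 0$. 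So neither the swap heuristic nor the weight-$\le 6$ orbit enumeration can succeed; the enumeration would just exhibit these nonzero terms.

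\textbf{What is missing.} Your twisted-trace setup does work once you use seven layers. Write $\tr(P_0M(x)^7)=\frac{1}{14}\sum_g Z_g(x)$, where
$Z_g=\sum_{S_1,\dots,S_7}T[gS_1,S_2]\,T[S_2,S_3]\cdots T[S_7,S_1]\,x^{\sum_i|S_i|}$.
The key idea is an empty layer, not a swap. A configuration of total weight $k\le 6$ on $7$ layers has some $S_j=\varnothing$ by pigeonhole. Let $j$ be the last such index. Since $g\varnothing=\varnothing$ and $T[gA,gB]=T[A,B]$, applying $g$ to $S_j,S_{j+1},\dots,S_7,S_1$ moves the twist onto the empty layer, where it is invisible. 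This gives a weight-preserving bijection between $g$-twisted and untwisted configurations; it is invertible because $g$ preserves which layers are empty. Hence $[x^k]Z_g=[x^k]Z_e$ for all $g$ and all $k\le 6$, so $[x^k]I_{\mathrm{cyc}}=0$.

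The paper itself offers no argument beyond reading $I_{\mathrm{cyc}}=-578x^7-3402x^8-5740x^9-2520x^{10}$ off the explicit $29\times 29$ computation of $\tr(M(x)^7)$. The empty-layer argument replaces that computation for this statement. It also shows that the threshold $7$ is the number of torus layers, not an arithmetic feature of $n$.
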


%% ────────────────────────────────────────────────────────────
\section{Asymptotic behavior and Shannon capacity}
%% ────────────────────────────────────────────────────────────

As $d \to \infty$, the transfer operator yields
\[
I(G \boxtimes P_d,1) \sim \rho(T)^d.
\]

\begin{remark}[Entropy vs extremal growth]
The spectral radius $\rho(T)$ governs the exponential growth of the total number of independent configurations. Extremal quantities such as $\alpha(G^{\boxtimes d})$ arise from the large-$x$ behavior of $M(x)$.
\end{remark}

\begin{remark}[Normalization and capacity]
For circulant graphs $C_n$, the transfer matrix yields exponential growth of the form
\[
I(C_n \boxtimes P_d,1) \sim \rho(T)^d.
\]
This suggests the normalization
\[
\Theta(C_n) \approx \rho(T)^{1/n},
\]
with equality verified in computed cases such as $C_7$.
\end{remark}
\section{Conclusion}

We have shown that dihedral symmetry decomposes the transfer operator into combinatorial and harmonic components. The dominant growth is governed by the trivial component, while the cyclotomic components provide structured corrections.

This framework suggests a new perspective on independence growth and its relation to spectral bounds, highlighting a separation between combinatorial and Fourier-analytic phenomena.

%% ────────────────────────────────────────────────────────────
\appendix
\section{The case $C_7$: explicit construction}\label{app:c7}
%% ────────────────────────────────────────────────────────────

\subsection{State space}
For $C_7$, $|\Ical(C_7)| = L_7 = 29$ and $P(C_7,x) = 1 + 7x + 14x^2 + 7x^3$, where $L_n$ is the $n$-th Lucas number.

\subsection{Orbit decomposition}
Under $\Dih(7)$, the 29 states partition into 5 orbits:

\medskip
\begin{tabular}{ccccc}
\toprule
Orbit & Representative & Weight $|I|$ & Size $|O_i|$ & Gap structure \\
\midrule
$O_0$ & $\varnothing$ & 0 & 1 & --- \\
$O_1$ & $\{0\}$ & 1 & 7 & --- \\
$O_2$ & $\{0,2\}$ & 2 & 7 & min gap 2 \\
$O_3$ & $\{0,3\}$ & 2 & 7 & min gap 3 \\
$O_4$ & $\{0,2,4\}$ & 3 & 7 & equispaced \\
\bottomrule
\end{tabular}

\subsection{Construction of $T_{\mathrm{orb}}$}

The entry $T_{\mathrm{orb}}[i,j]$ counts independent sets in orbit $O_j$ compatible with the representative of $O_i$.  Two sets $I, J$ are compatible if $(I - J) \cap B = \varnothing$, i.e., $J$ avoids $N[u] = \{u-1, u, u+1\}$ for every $u \in I$.

\textbf{Row $O_0$ ($\varnothing$):} compatible with everything.  Row: $(1, 7, 7, 7, 7)$, sum~$= 29$.

\textbf{Row $O_1$ ($\{0\}$):} blocks $\{6,0,1\}$.  Singletons: 4 survive.  Gap-2 pairs: $\{2,4\}, \{3,5\}$ survive (2).  Gap-3 pairs: $\{2,5\}$ survives (1).  Triples: 0.  Row: $(1, 4, 2, 1, 0)$, sum~$= 8$.

\textbf{Row $O_2$ ($\{0,2\}$):} blocks $\{0,1,2,3,6\}$.  Available: $\{4,5\}$, but $|4-5|_7 = 1$.  Row: $(1, 2, 0, 0, 0)$, sum~$= 3$.

\textbf{Row $O_3$ ($\{0,3\}$):} blocks $\{0,1,2,3,4,6\}$.  Only $\{5\}$.  Row: $(1, 1, 0, 0, 0)$, sum~$= 2$.

\textbf{Row $O_4$ ($\{0,2,4\}$):} blocks $\ZZ_7$.  Only $\varnothing$.  Row: $(1, 0, 0, 0, 0)$, sum~$= 1$.

\[
T_{\mathrm{orb}} = \begin{pmatrix}
1 & 7 & 7 & 7 & 7 \\
1 & 4 & 2 & 1 & 0 \\
1 & 2 & 0 & 0 & 0 \\
1 & 1 & 0 & 0 & 0 \\
1 & 0 & 0 & 0 & 0
\end{pmatrix},
\qquad \text{row sums: } (29, 8, 3, 2, 1).
\]

\subsection{Spectral data}

The characteristic polynomial of $T_{\mathrm{orb}}$ is $\lambda \cdot f_4(\lambda)$ where
\[
f_4(\lambda) = \lambda^4 - 5\lambda^3 - 29\lambda^2 + 47\lambda + 42,
\]
with roots $\lambda \approx \{+7.846, +1.958, -0.660, -4.144\}$ and spectral radius $\rho \approx 7.846$.

\begin{corollary}\label{cor:c7}
The polynomial $f_4$ is irreducible over $\QQ$ with $\Gal(f_4/\QQ) = S_4$.  Its splitting field (degree $24$ over $\QQ$) shares no nontrivial subfield with $K = \QQ(\cos(2\pi/7))$ (degree $3$ over $\QQ$).  However, $f_4 \equiv x(x-1)(x+1)(x+2) \pmod{7}$.
\end{corollary}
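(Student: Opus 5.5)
The plan is to verify the four assertions of Corollary~\ref{cor:c7} in order: irreducibility, the Galois group, disjointness from $K$, and the congruence modulo $7$. Everything rests on the explicit quartic $f_4(\lambda)=\lambda^4-5\lambda^3-29\lambda^2+47\lambda+42$, which is the characteristic polynomial of $T_{\mathrm{orb}}$ with the factor $\lambda$ removed.

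\textbf{The congruence modulo $7$.} This is a direct check. Reducing the coefficients gives
\[
f_4 \equiv x^4+2x^3+6x^2+5x \pmod 7.
\]
Expanding the other side gives
\[
x(x-1)(x+1)(x+2)=x(x^3+2x^2-x-2)=x^4+2x^3-x^2-2x \equiv x^4+2x^3+6x^2+5x \pmod 7.
\]
So the two agree. The reduction has four distinct roots, so $7\nmid\operatorname{disc}(f_4)$. By Dedekind's theorem, Frobenius at $7$ is then the identity, so this congruence gives no Galois information by itself.

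\textbf{Irreducibility and the Galois group.} I would use Dedekind's theorem on cycle types of Frobenius elements at primes $p\nmid\operatorname{disc}(f_4)$.
\begin{enumerate}
\item First compute $\operatorname{disc}(f_4)$ exactly, to know which primes are admissible.
\item Rule out rational roots by the rational root test over the divisors of $42$.
\item Search small primes $p=3,5,11,13,\dots$ for three factorization patterns of $f_4 \bmod p$:
 \begin{itemize}
 \item an irreducible quartic, which gives a $4$-cycle and proves irreducibility over $\QQ$, hence transitivity;
 \item a linear factor times an irreducible cubic, which gives a $3$-cycle;
 \item two linear factors times an irreducible quadratic, which gives a transposition.
 \end{itemize}
\item A transitive subgroup of $S_4$ containing a $3$-cycle and a transposition is all of $S_4$.
\end{enumerate}
As a cross-check, I would also confirm $S_4$ classically: the resolvent cubic should be irreducible over $\QQ$ and $\operatorname{disc}(f_4)$ should not be a square. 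The splitting field $L$ then has degree $24$.

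\textbf{Disjointness from $K$.} This part is purely group-theoretic. The field $K=\QQ(\cos(2\pi/7))$ is a cyclic cubic Galois extension of $\QQ$. The intersection $L\cap K$ is a subfield of $K$, so it equals $\QQ$ or $K$. If it were $K$, then $K$ would be a Galois subfield of $L$ of degree $3$. That would require a normal subgroup of $S_4$ of index $3$, i.e.\ a surjection $S_4\to\ZZ/3$. No such subgroup exists: the normal subgroups of $S_4$ are $1$, $V_4$, $A_4$ and $S_4$, with quotients of order $24$, $6$, $2$ and $1$. Hence $L\cap K=\QQ$, which is exactly the statement that they share no nontrivial subfield.

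\textbf{Main obstacle.} The only real work is computational: calculating $\operatorname{disc}(f_4)$ and finding primes that realise the needed cycle types. If a $4$-cycle prime is slow to appear, I would instead exclude a factorization into two quadratics over $\ZZ$. This can be done by combining local data, for example a prime with pattern linear times cubic, which is incompatible with a $2+2$ splitting over $\QQ$.
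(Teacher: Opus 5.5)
The paper gives no proof of this corollary; it is stated bare after the spectral data, presumably as computer-algebra output. Your proposal is therefore a genuinely different route, in that it supplies a checkable certificate where the paper supplies none.

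Your mod-$7$ check is complete and correct. So is your disjointness argument: $S_4$ has no normal subgroup of index $3$, so the cyclic cubic field $K$ cannot lie in $L$.

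The Galois-group part is still only a plan. The computations do go through:
\begin{itemize}
\item $\mathrm{disc}(f_4)=1117047712=2^5\cdot 11\cdot 3173431$. So $11$ must be dropped from your list of test primes. The odd power of $2$ already shows the discriminant is not a square.
\item Modulo $5$, $f_4\equiv x^4+x^2+2x+2$ has no roots and no factorization into two quadratics. It is therefore irreducible, which gives irreducibility over $\QQ$ and a $4$-cycle.
\item Modulo $3$, $f_4\equiv x(x^3+x^2+x+2)$ with the cubic irreducible, which gives a $3$-cycle.
\end{itemize}

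These two primes already suffice, so you do not need a transposition. A subgroup of $S_4$ containing elements of order $4$ and $3$ has order divisible by $12$. It cannot be $A_4$, because a $4$-cycle is odd, so it is $S_4$.

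This shortcut matters in practice. No admissible prime below $29$ gives the $1{+}1{+}2$ pattern; the first is $f_4\equiv(x-12)(x+8)(x^2-x+5)\pmod{29}$.

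Your classical cross-check also works. The resolvent cubic is $y^3+29y^2-403y-8131$, and $8131=47\cdot 173$. None of $\pm1,\pm47,\pm173,\pm8131$ is a root, so the resolvent is irreducible.

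Your remark that the congruence mod $7$ ``gives no Galois information'' undersells it. Since $f_4$ is squarefree mod $7$, the prime $7$ is unramified in the splitting field $L$. But $7$ is totally ramified in $K\subset\QQ(\zeta_7)$. Hence $K\not\subseteq L$, and since $[K:\QQ]=3$ is prime, $L\cap K=\QQ$. This second proof of disjointness needs neither $\Gal(f_4/\QQ)=S_4$ nor the normality of $K$. It is plausibly the real content of the paper's ``However'' clause.
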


The full determining equation is
\[
\chi_T(\lambda) = \lambda^{13} \cdot \underbrace{(\lambda^4 - 5\lambda^3 - 29\lambda^2 + 47\lambda + 42)}_{f_{\mathrm{anom}}} \cdot \underbrace{(\lambda^6 + 2\lambda^5 - 9\lambda^4 - 7\lambda^3 + 24\lambda^2 + \lambda - 13)^2}_{f_{\mathrm{cyc}}^2}.
\]
Degree accounting: $13 + 4 + 12 = 29 = L_7$.

\subsection{Verification at $d = 1$}

\[
I(C_7, x) = \sum_{i=0}^{4} |O_i| \cdot x^{w_i} = 1 + 7x + 14x^2 + 7x^3.
\]

\subsection{Verification at $d = 2$ (strip)}

$I_{\mathrm{strip}}(2, x) = \sum_{i,j} |O_i| \cdot T_{\mathrm{orb}}[i,j] \cdot x^{w_i + w_j}$.  Expanding row by row:

\medskip
\begin{tabular}{ll}
$O_0$: & $1 \cdot (1 + 7x + 14x^2 + 7x^3) = 1 + 7x + 14x^2 + 7x^3$ \\
$O_1$: & $7x \cdot (1 + 4x + 3x^2) = 7x + 28x^2 + 21x^3$ \\
$O_2$: & $7x^2 \cdot (1 + 2x) = 7x^2 + 14x^3$ \\
$O_3$: & $7x^2 \cdot (1 + x) = 7x^2 + 7x^3$ \\
$O_4$: & $7x^3 \cdot 1 = 7x^3$ \\
\end{tabular}

\medskip\noindent
Sum: $I_{\mathrm{strip}}(2, x) = 1 + 14x + 56x^2 + 56x^3$, with $I(1) = 127$ and $\alpha_{\mathrm{strip}} = 3$.

\subsection{Verification at $d = 2$ (torus)}
Computing $\tr(M(x)^7)$ (corresponding to cyclic boundary conditions of length $7$) on the full $29 \times 29$ matrix:
\begin{align*}
I(C_7^{\boxtimes 2}, x) &= 1 + 49x + 980x^2 + 10388x^3 + 63553x^4 + 229908x^5 \\
&\quad + 486668x^6 + 576856x^7 + 346381x^8 + 81095x^9 + 980x^{10}.
\end{align*}
This gives $\alpha(C_7^{\boxtimes 2}) = 10$ with $a_{10} = 980 = 20 \cdot 7^2$.

The cyclotomic correction is $I_{\mathrm{cyc}}(x) = -2520x^{10} - 5740x^9 - 3402x^8 - 578x^7$: sparse, negative, confined to $k \geq 7 = n$.

\subsection{Summary}

\medskip
\begin{tabular}{llccccc}
\toprule
$d$ & Object & Formula & $I(\cdot,1)$ & $\alpha$ & Leading coeff & Size \\
\midrule
1 & $C_7$ & orbit sum & 29 & 3 & 7 & $5$ \\
2 & strip & $\eta^T M_{\mathrm{orb}} \mathbf{1}$ & 127 & 3 & 56 & $5 \times 5$ \\
2 & torus $C_7^{\boxtimes 2}$ & $\tr(M^7)$ & $1{,}796{,}859$ & 10 & $980 = 20 \cdot 7^2$ & $29 \times 29$ \\
3 & strip & $\eta^T M_{\mathrm{orb}}^2 \mathbf{1}$ & $1{,}387$ & 6 & $49 = 7^2$ & $5 \times 5$ \\
3 & torus $C_7^{\boxtimes 3}$ & DAG & 2544256835855451311632423 & 33 & 16672544 & large \\
\bottomrule
\end{tabular}

%% ────────────────────────────────────────────────────────────

\end{document}